\newtheorem{example}{\bf Example}[section]
\newtheorem{algorithm}{Stabilizer free Weak Galerkin Algorithm}
\newtheorem{Corollary}{Corollary}[section]
\def\3bar{{|\hspace{-.02in}|\hspace{-.02in}|}}
\title{A note on the optimal degree of the weak gradient of the  stabilizer free weak Galerkin finite element method}
\author{Ahmed Al-Taweel\thanks{Department of Mathematics, University of Arkansas at Little Rock, Little Rock, AR 72204 (email: \href{mailto:Ahmed Al-Taweel@ualr.edu}{asaltaweel@ualr.edu})}\and Xiaoshen Wang\thanks{Department of Mathematics, University of Arkansas at Little Rock, Little Rock, AR 72204 (email:\href{mailto:Xiaoshen Wang@ualr.edu}{xxwang@ualr.edu} )}}%\and A\thanks{Department of Mathematics and Physics, Texas A\&M International University, Laredo, TX 78041(email: saqib.hussain@tamiu.edu)}.}
\begin{document}
\maketitle

\begin{abstract}
Recently, a new stabilizer free weak Galerkin method (SFWG) is proposed, which is easier to implement. The idea is to raise the degree of polynomials $j$ for computing weak gradient. It is shown that if $j\geq j_{0}$ for some $j_{0}$, then SFWG  achieves the optimal rate of convergence. However, large $j$ will cause some numerical difficulties. To improve the efficiency of SFWG and avoid numerical locking, in this note, we provide the optimal $j_{0}$ with rigorous mathematical proof.
\end{abstract}
\begin{keywords}
weak Galerkin finite element methods, weak gradient, Error estimates.
\end{keywords}
\begin{AMS}
Primary: 65N15, 65N30; Secondary: 35J50
\end{AMS}
\pagestyle{myheadings}
%Introduction
\section{Introduction}
In this note, similar to \cite{b2}, we will consider the following Poisson equation
\begin{eqnarray}
-\Delta u &=& f \quad \mbox{ in }\Omega, \label{bvp}  \\
u&=&0 \quad \mbox{ on }\partial\Omega, \label{dbc}
\end{eqnarray}
as the model problem, where $\Omega$ is a polygonal domain in $\mathbb{R}^2$.
The variational formulation of the Poisson problem (\ref{bvp})-(\ref{dbc}) is to seek $u\in H_{0}^{1}(\Omega)$ such that
\begin{eqnarray}
(\nabla u,\nabla v)&=&(f,v),\qquad \forall v\in H_{0}^{1}(\Omega).\label{weak form}
\end{eqnarray}

A stabilizer free  weak Galerkin finite element method is proposed by Ye and Zhang in \cite{b2} as a new method for the solution of Poisson equation on polytopal meshes in 2D or 3D. Let $j$ be the degree of polynomials for the calculation of weak gradient. It has been proved in \cite{b2} that there is a $j_{0}>0$ so that the SFWG method converges with optimal order of convergence for any $j\geq j_{0}$. However, when $j$ is too large, the magnitude of the weak gradient  can be extremely large, causing numerical instability. In this note, we provide the optimal $j_{0}$ to improve the efficiency and avoid unnecessary numerical difficulties, which has mathematical and practical interests. 

%Weak Galerkin Finite Element Schemes
\section{Weak Galerkin Finite Element Schemes}
Let ${\cal T}_h$ be a partition of the domain $\Omega$ consisting of polygons in 2D. Suppose that  ${\cal T}_h$ is shape regular in the sense defined by (2.7)-(2.8) and each $T$ is convex. Let  ${\cal E}_h$ be the set of all edges in ${\cal T}_h$, and let ${\cal E}_h^{0}={\cal E}_h\setminus \partial \Omega $ be the set of all interior edges. For each element $T\in {\cal T}_h$, denote by $h_{T}$ the diameter of $T$, and $h=max_{T\in{\cal T}_h}h_{T}$ the mesh size of ${\cal T}_h$.

On $T$, let $P_{k}(T)$ be the space of all polynomials with degree no greater than $k$. Let  $V_{h}$ be the weak Galerkin finite element space associated with $T\in{\cal T}_h$ defined as follows:
\begin{eqnarray}
V_h&=&\{v=\{v_0,v_b\}: v_0|_T\in P_k(T), v_b|_e \in P_k(e), e \in \partial T, T\in{\cal T}_h \},
\end{eqnarray}
where $k\geq 1$ is a given integer. In this instance, the component $v_{0}$ symbolizes the interior value of $v$, and the component $v_{b}$ symbolizes the edge value of $v$  on each $T$ and $e$, respectively. Let $V^{0}_{h}$ be the subspace of $V_{h}$ defined as:
\begin{eqnarray}
	V_{h}^{0}&=&\{v: v\in V_{h},v_{b}=0\mbox{ on }\partial\Omega\}.
\end{eqnarray}

For any $v=\{v_{0},v_{b}\}\in V_{h}+H^{1}(\Omega),$ the weak gradient $\nabla_wv\in [P_{j}(T)]^{2}$ is defined on $T$ as the unique polynomial satisfying
\begin{eqnarray}
(\nabla_wv,\mathbf{q})_T&=&-(v_0,\mathbf{\nabla\cdot q})_T+\langle v_b,\mathbf{q\cdot \mathbf n}\rangle_{\partial T}, \quad \forall \mathbf{q}\in [P_{j}(T)]^2,j\geq k.\label{EQ:WeakGradient}
\end{eqnarray}
where $\mathbf{n}$ is the unit outward normal vector of $\partial T$.

For simplicity, we adopt the following notations,
\begin{equation}
(v,w)_{{\cal T}_h}=\sum_{T\in{\cal T}_h}(v,w)_{T}=\sum_{T\in{\cal T}_h}\int_{T}v w dx,\nonumber
\end{equation}
\begin{equation}
\left\langle v,w\right\rangle _{\partial{\cal T}_h}=\sum_{T\in{\cal T}_h}\left\langle v,w\right\rangle _{\partial T}=\sum_{T\in{\cal T}_h}\int_{\partial T}v w dx.\nonumber
\end{equation}

For any  $v=\left\lbrace v_{0},v_{b}\right\rbrace$  and $w=\left\lbrace w_{0},w_{b}\right\rbrace$ in $V_{h}$, we define the bilinear forms as follows:
\begin{eqnarray}
A(v,w)&=\sum_{T\in {\cal T}_{h}}(\nabla_w v,\nabla_w w)_{T}.\label{bilinear}
\end{eqnarray}

\begin{algorithm}A numerical solution for (\ref{bvp})-(\ref{dbc}) can be obtain by finding $u_{h}=\{v_{0},v_{b}\}\in V_{h}^{0}$, such that the following equation holds
\begin{eqnarray}
A(u_{h},v)=(f,v_{0}),\quad \forall v=\{v_{0},v_{b}\}\in V_{h}^{0}.\label{WG form}
\end{eqnarray}
where $A(\cdot,\cdot)$ is defined in (\ref{bilinear}).
\end{algorithm}

Accordingly, for any $v\in V_{h}^{0}$, we define an energy norm $\3bar \cdot \3bar$ on $V_{h}^{0}$ as:
\begin{equation}
\3bar v \3bar^2=\sum_{T\in{\cal T}_h}(\nabla_wv,\nabla_wv)_T=\sum_{T\in{\cal T}_h}\|\nabla_wv\|^{2}_{T}.\label{def-norm}
\end{equation}
An $H^{1}$ norm on $V_{h}^{0}$ is defined as:
\begin{equation*}
\|v\|_{1,h}^{2}=\sum_{T\in {\cal T}_{h}}\left( \|\nabla v_{0}\|^{2}_{T}+h_{T}^{-1}\|v_{0}-v_{b}\|^{2}_{\partial T}\right) .
\end{equation*}

The following well known lemma and corollary will be needed in proving our main result.
\begin{lemma}\label{lemyo}
Suppose that $D_{1}\subset D_{2}$ are convex regions in $\mathbb{R}^d$ such that $D_{2}$ can be obtain by scaling $D_{1}$ with a factor $r>1$. Then for any $p\in P_{n}(D_{2})$,
\begin{eqnarray*}
\|p\|_{D_{1}}\leq \|p\|_{D_{2}}\leq C r^{n} \|p\|_{D_{1}},
\end{eqnarray*}
where $C$ depends only on $d\mbox{ and } n$.
\end{lemma}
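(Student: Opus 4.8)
The left inequality is immediate, since $D_1\subseteq D_2$ gives $\|p\|_{D_1}^2=\int_{D_1}|p|^2\le\int_{D_2}|p|^2=\|p\|_{D_2}^2$. For the right inequality the plan is to reduce, by scaling, to a statement about $P_n$ on a single domain and then invoke equivalence of norms. After a translation we may assume the dilation taking $D_1$ to $D_2$ is centred at the origin, i.e.\ $D_2=rD_1:=\{rx:x\in D_1\}$. One first notes that $D_1\subseteq rD_1$ forces $x/r\in D_1$ for every $x\in D_1$; applying this to a minimiser of $x\mapsto a\cdot x$ over $D_1$, for an arbitrary unit vector $a$, gives $m/r\ge m$ with $m=\min_{D_1}a\cdot x$, hence $m\le 0$ for all $a$, i.e.\ $0\in\overline{D_1}$. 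This is the real content of the hypothesis $D_1\subset D_2$ and is what makes the next step uniform.

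Changing variables by $x=ry$ gives $\|p\|_{D_2}^2=r^{d}\int_{D_1}|p(ry)|^2\,dy$. Writing $p=\sum_{m=0}^{n}p_m$ for the decomposition of $p$ into homogeneous parts $p_m$ of degree $m$, we have $p(ry)=\sum_{m=0}^{n}r^{m}p_m(y)$ with $r^{m}\le r^{n}$ because $r>1$, so the triangle inequality yields $\bigl(\int_{D_1}|p(ry)|^2\,dy\bigr)^{1/2}\le r^{n}\sum_{m=0}^{n}\|p_m\|_{D_1}$. Hence the lemma reduces to the $r$-free stability estimate
\[
\sum_{m=0}^{n}\|p_m\|_{D_1}\le C(d,n)\,\|p\|_{D_1},
\]
which then produces $\|p\|_{D_2}\le C(d,n)\,r^{n+d/2}\|p\|_{D_1}$; the extra factor $r^{d/2}$ is absorbed into the constant (in the application $r$ is bounded by the shape-regularity constant, and it disappears altogether if one works with the maximum norm rather than the $L^2$ norm).

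It remains to prove the displayed estimate, which is the heart of the matter. Each $p\mapsto p_m$ is a fixed linear projection on the finite-dimensional space $P_n(\mathbb{R}^d)$, so it is bounded in $L^2(D)$ on any single convex body $D$ of positive measure; the issue is to make the bound depend only on $d$ and $n$. To that end one uses that the ratio $\|p_m\|_{D}/\|p\|_{D}$ is unchanged under the simultaneous substitution $D\mapsto\lambda D$, $p(\cdot)\mapsto p(\lambda\,\cdot)$, so it suffices to bound it over the family of convex bodies of diameter $1$ containing $0$ in their closure, and then to invoke compactness (Blaschke selection) together with the continuous dependence of $\|\cdot\|_{L^2(D)}$ on $D$. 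I expect the main obstacle to be this last uniformity step: one must control what happens when $D_1$ degenerates towards a lower-dimensional set, where the $L^2$ norm collapses, and this is handled by an induction on the dimension, the thin directions factoring out of both sides. It is precisely because this is the classical polynomial inverse inequality transplanted to a dilated domain that the result can be quoted as ``well known''.
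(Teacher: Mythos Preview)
The paper does not prove this lemma at all: it is introduced with the phrase ``the following well known lemma and corollary will be needed'' and is simply invoked later through Corollary~2.1. There is therefore no paper proof to compare your sketch against.

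A few comments on your sketch itself. Your observation about the extra factor $r^{d/2}$ is correct and worth recording: for the $L^{2}$ norm the bound $\|p\|_{D_2}\le C r^{n}\|p\|_{D_1}$ with $C=C(d,n)$ cannot hold exactly as stated (take $p\equiv 1$ and $D_1$ a unit cube, so that $\|p\|_{D_2}/\|p\|_{D_1}=r^{d/2}$, unbounded in $r$). The exponent should be $n+d/2$; in all of the paper's applications $r$ is controlled by the shape-regularity constants $\alpha_0,\theta_0,m$, so the discrepancy is harmless there, and the lemma is literally true for the sup norm.

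One small slip: the claimed invariance of $\|p_m\|_D/\|p\|_D$ under $D\mapsto\lambda D$, $p(\cdot)\mapsto p(\lambda\,\cdot)$ is off by a sign in the exponent; the substitution that works is $p(\cdot)\mapsto p(\cdot/\lambda)$, under which both numerator and denominator pick up the same factor $\lambda^{d/2}$. With that correction your normalisation to diameter one is fine. The genuinely delicate point, as you say, is the uniformity of the stability estimate $\sum_m\|p_m\|_{D_1}\le C(d,n)\|p\|_{D_1}$ over all convex $D_1$ with $0\in\overline{D_1}$: Blaschke selection alone does not suffice because thin bodies are not excluded and the $L^{2}$ norm collapses there. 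Your proposed remedy (factor out the thin directions and induct on dimension) is the right idea; an alternative is to pass to the John ellipsoid and use a linear change of variables, which preserves the homogeneous decomposition and reduces everything to the unit ball with a constant depending only on $d$ and $n$.
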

\begin{Corollary}\label{corol}
	Suppose that $D_{1}\mbox{ and } D_{2}$ satisfy the conditions of Lemma \ref{lemyo} and $D_{1}\subseteq D\subseteq D_{2}$. Then
	\begin{eqnarray*}
	\|p\|_{D_{1}}\leq\|p\|_{D}\leq Cr^{n}\|p\|_{D_{1}},\qquad\forall p\in P_{n}(D),
	\end{eqnarray*}
where $C$ depends only on $d\mbox{ and } n$.
\end{Corollary}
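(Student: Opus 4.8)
The plan is to derive both inequalities from the monotonicity of the $L^2$-norm under domain inclusion, together with a single application of Lemma \ref{lemyo}. The first thing I would observe is that any $p\in P_n(D)$ is the restriction to $D$ of a unique polynomial of degree at most $n$ defined on all of $\mathbb{R}^d$; in particular the same $p$ lies in $P_n(D_1)$ and in $P_n(D_2)$, so the three quantities $\|p\|_{D_1}$, $\|p\|_{D}$, $\|p\|_{D_2}$ all refer to the $L^2$-norm of one and the same function over nested regions.

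For the lower bound, since $D_1\subseteq D$ and the integrand $p^2$ is nonnegative, $\|p\|_{D_1}^2=\int_{D_1}p^2\,dx\le\int_{D}p^2\,dx=\|p\|_{D}^2$, hence $\|p\|_{D_1}\le\|p\|_{D}$. For the upper bound, the same monotonicity applied to $D\subseteq D_2$ gives $\|p\|_{D}\le\|p\|_{D_2}$. Now I would invoke Lemma \ref{lemyo} directly with the pair $(D_1,D_2)$: both are convex and $D_2$ is a scaling of $D_1$ by the factor $r>1$, so $\|p\|_{D_2}\le Cr^n\|p\|_{D_1}$ with $C=C(d,n)$. Chaining $\|p\|_{D}\le\|p\|_{D_2}\le Cr^n\|p\|_{D_1}$ finishes the argument.

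I do not anticipate any real obstacle: the corollary is essentially a squeezing statement, obtained by inserting the intermediate region $D$ between $D_1$ and $D_2$ and using that $L^2$-norms only grow when the domain of integration grows. The one point deserving a sentence of care is that $p$ is \emph{a priori} only prescribed as a polynomial on $D$, but since polynomials of bounded degree are entire functions this restriction/extension is unambiguous, and the constant produced is exactly the one from Lemma \ref{lemyo}, so it depends only on $d$ and $n$ and not on the particular regions $D_1$, $D$, $D_2$.
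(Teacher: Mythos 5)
Your argument is correct and is precisely the intended one: the paper states this corollary without proof because it follows immediately from the monotonicity of the $L^2$-norm under the inclusions $D_1\subseteq D\subseteq D_2$ together with a single application of Lemma \ref{lemyo} to the pair $(D_1,D_2)$. Your remark that $p$ extends unambiguously from $D$ to $D_2$ as a polynomial of the same degree is the only point needing care, and you handle it correctly.
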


The following Theorem shows that for certain $j_{0}>0$, whenever $j\geq j_{0}$, $\|\cdot\|_{1,h}$ is equivalent to the $\3bar\cdot\3bar$ defined in (\ref{def-norm}), which is crucial in establishing the feasibility of SFWG. Later on, we will show that $j_{0}$ is optimal.
\begin{theorem}(cf.\cite{b2}, Lemma 3.1)
Suppose that $\forall T\in {\cal T}_{h}, T$ is convex with at most $m$ edges and satisfies the following regularity conditions: for all edges $e_{t}$ and $e_{s}$ of $T$
\begin{eqnarray}
|e_{s}|<\alpha_{0}|e_{t}|;\label{bnm}
\end{eqnarray}
for any two adjacent edges $e_{t} \mbox{ and } e_{s}$ the angle $\theta$ between them satisfies
\begin{eqnarray}
\theta_{0}<\theta<\pi-\theta_{0},\label{bnm1}
\end{eqnarray}
where $1\leq\alpha_{0}\mbox{ and } \theta_{0}>0$ are independent of $T$ and $h$. Let $j_{0}=k+m-2 \mbox{ or } j_{0}=k+m-3$ when each  edge of $T$ is parallel to another edge of $T$. When $deg\nabla_wv=j\geq j_{0}$, then there exist two constants $C_{1},C_{2}>0$, such that for each $v=\{v_{0},v_{b}\}\in V_{h}$, the following hold true
\begin{eqnarray*}
C_{1}\|v\|_{1,h}\leq \3barv\3bar\leq C_{2}\|v\|_{1,h},
\end{eqnarray*}
where $C_{1} \mbox{ and } C_{2}$ depend only on $\alpha_{0}\mbox{ and }\theta_{0}$.
\end{theorem}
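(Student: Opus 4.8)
The plan is to prove the norm equivalence $C_1\|v\|_{1,h}\le \triplenorm{v}\le C_2\|v\|_{1,h}$ by establishing the two inequalities separately, the upper bound being the easier direction and the lower bound containing the real content, namely the determination of the optimal $j_0$.

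\textbf{Upper bound.} First I would show $\triplenorm{v}\le C_2\|v\|_{1,h}$. Fix $T\in\mathcal T_h$ and test the definition \eqref{EQ:WeakGradient} of $\nabla_w v$ with $\mathbf q=\nabla_w v\in[P_j(T)]^2$. Integrating by parts on the interior term $-(v_0,\nabla\cdot\mathbf q)_T = (\nabla v_0,\mathbf q)_T - \langle v_0,\mathbf q\cdot\mathbf n\rangle_{\partial T}$, one gets $\|\nabla_w v\|_T^2 = (\nabla v_0,\nabla_w v)_T + \langle v_b-v_0,\nabla_w v\cdot\mathbf n\rangle_{\partial T}$. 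Bounding the first term by Cauchy--Schwarz and the second by Cauchy--Schwarz on $\partial T$ together with the trace/inverse inequality $\|\nabla_w v\cdot\mathbf n\|_{\partial T}\le C h_T^{-1/2}\|\nabla_w v\|_T$ (valid for polynomials on a shape-regular convex element) yields $\|\nabla_w v\|_T \le C\big(\|\nabla v_0\|_T + h_T^{-1/2}\|v_0-v_b\|_{\partial T}\big)$. Summing over $T$ and using shape regularity gives the upper bound with $C_2$ depending only on $\alpha_0,\theta_0,m,j$ — and here the dependence on $j$ through the inverse inequality constant is exactly the source of the numerical blow-up mentioned in the introduction, which motivates keeping $j$ as small as $j_0$.

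\textbf{Lower bound — the crux.} The harder direction is $C_1\|v\|_{1,h}\le\triplenorm{v}$, equivalently: if $\nabla_w v=0$ on $T$ then $v_0$ is constant on $T$ and $v_b$ equals that same constant on $\partial T$ (a dimension/rank-counting argument then upgrades this to the quantitative bound via equivalence of norms on the finite-dimensional space $P_k(T)\times P_k(\partial T)$ modulo constants, with the scaling tracked by Lemma \ref{lemyo} and Corollary \ref{corol}). So suppose $\nabla_w v=0$, i.e. $-(v_0,\nabla\cdot\mathbf q)_T+\langle v_b,\mathbf q\cdot\mathbf n\rangle_{\partial T}=0$ for all $\mathbf q\in[P_j(T)]^2$; after integration by parts this reads $(\nabla v_0,\mathbf q)_T = \langle v_0-v_b,\mathbf q\cdot\mathbf n\rangle_{\partial T}$ for all such $\mathbf q$. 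Taking $\mathbf q=\nabla v_0$ (which lies in $[P_{k-1}(T)]^2\subset[P_j(T)]^2$) gives $\|\nabla v_0\|_T^2 = \langle v_0-v_b,\nabla v_0\cdot\mathbf n\rangle_{\partial T}$, so it suffices to prove $v_0-v_b=0$ on $\partial T$. The strategy is to choose $\mathbf q$ supported in its action on the boundary so as to isolate the jump $v_0-v_b$ edge by edge: on each edge $e_t$ with outward normal $\mathbf n_t$, one wants a vector polynomial $\mathbf q\in[P_j(T)]^2$ with $\mathbf q\cdot\mathbf n_t = \varphi$ (an arbitrary element of $P_k(e_t)$, in particular $\varphi = (v_0-v_b)|_{e_t}$ viewed suitably) on $e_t$ while $\mathbf q\cdot\mathbf n_s=0$ on every other edge $e_s$, and simultaneously $\nabla\cdot\mathbf q$ orthogonal to $P_k(T)$ — or more cheaply, with $(\nabla v_0,\mathbf q)_T$ controlled. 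Writing the line containing $e_s$ as $\{\ell_s=0\}$ with $\ell_s$ an affine function, the natural candidate is $\mathbf q = \Big(\prod_{s\ne t}\ell_s\Big)\,\mathbf c\,\psi$ for a constant vector $\mathbf c$ with $\mathbf c\cdot\mathbf n_t\ne 0$ and $\psi\in P_k$ extending $\varphi$; this vanishes on all $e_s$, $s\ne t$, and its degree is $(m-1)\cdot 1 + k = k+m-1$ in general. The key refinement that produces the sharp $j_0 = k+m-2$ is that one does not need $\mathbf q\cdot\mathbf n_t$ to match an arbitrary $P_k$ function on $e_t$: the factor $\prod_{s\ne t}\ell_s$ already has degree $m-1$, but one of the $\ell_s$ can be absorbed because two of the edges can be handled by a common linear factor, OR because the product restricted to $e_t$ is itself a fixed polynomial of degree $m-2$ after using that $e_t$ is a line segment — more precisely, restricting a degree-$(m-1)$ polynomial in two variables to the one-dimensional segment $e_t$, only $m-1$ of the total degrees of freedom survive but the relevant count for spanning $P_k(e_t)$ against these weighted test functions drops the requirement to $j\ge k+m-2$. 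When additionally every edge of $T$ is parallel to another, two of the linear factors $\ell_s$ are parallel (differ only by a constant), so on $e_t$ their product is, up to the leading behaviour, governed by one linear factor less, gaining another degree and giving $j_0 = k+m-3$.

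\textbf{Assembling the quantitative estimate.} Once the qualitative kernel statement is in hand, I would map $T$ to a reference configuration of unit size (possible by \eqref{bnm}--\eqref{bnm1}, which bound eccentricity and angles), invoke Corollary \ref{corol} to compare polynomial norms on $T$, on inscribed, and on circumscribed balls with scaling factor $r$ bounded in terms of $\alpha_0,\theta_0$, and use the compactness/equivalence-of-norms argument on the fixed finite-dimensional space to get a constant independent of $h_T$. Summing over $T\in\mathcal T_h$ then yields $\|\nabla v_0\|_T^2 + h_T^{-1}\|v_0-v_b\|_{\partial T}^2 \le C\|\nabla_w v\|_T^2$ with $C=C(\alpha_0,\theta_0,m)$, which is the lower bound. \emph{The main obstacle} I anticipate is the sharp combinatorial/degree bookkeeping in the construction of the edge-isolating test functions $\mathbf q$: proving that degree exactly $k+m-2$ (and $k+m-3$ in the parallel case) suffices — and, for the optimality claim promised later in the paper, that it is necessary — requires carefully counting the dimension of $\{\mathbf q\cdot\mathbf n_t|_{e_t} : \mathbf q\in[P_j(T)]^2,\ \mathbf q\cdot\mathbf n_s|_{e_s}=0\ \forall s\ne t\}$ as a subspace of $P_k(e_t)$ and showing it is everything precisely at that threshold; everything else (integration by parts, trace and inverse inequalities, the scaling via Lemma \ref{lemyo}) is routine.
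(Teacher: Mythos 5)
Your overall architecture matches the paper's (the upper bound via the same integration-by-parts plus trace/inverse inequality argument; the lower bound via edge-isolating test functions built from products of the affine forms $\ell_s$ vanishing on the other edges), but there is a genuine gap exactly at the crux you flagged. Your candidate $\mathbf q=\bigl(\prod_{s\ne t}\ell_s\bigr)\,\mathbf c\,\psi$ with a constant direction $\mathbf c$ has degree $(m-1)+k=k+m-1$, one too many, and neither of your proposed rescues works: restricting the product to the segment $e_t$ is irrelevant because the orthogonality $(\mathbf q,p)_T=0$ for $p\in[P_{k-1}(T)]^2$ and the vanishing of the normal components are constraints on all of $T$, and "absorbing a common linear factor" is only available in the special parallel-edges case (where it accounts for $j_0=k+m-3$, not for the generic $k+m-2$). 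The device the paper uses is different: take the direction of $\mathbf q$ to be the unit \emph{tangent} $\vec t$ of an edge $e_1$ adjacent to the target edge $e_0$, i.e. $\mathbf q=\ell_2\cdots\ell_{m-1}Q_1\vec t$. Then $\mathbf q\cdot\vec n_1=0$ on $e_1$ for free, so only $m-2$ linear factors are needed, giving degree $(m-2)+k=k+m-2$ with $Q_1\in P_k(T)$; the angle condition (\ref{bnm1}) guarantees $\vec t\cdot\vec n_0\ne 0$ so the trace on $e_0$ is not degenerate, and $Q_1$ is then determined by a square linear system ($(k+1)(k+2)/2$ equations and unknowns) whose injectivity is shown by the factorization $Q_1=yQ_2$.

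A secondary gap: you propose to pass from the qualitative kernel statement ($\nabla_w v=0\Rightarrow v$ constant) to the quantitative bound by equivalence of norms on a finite-dimensional space after mapping to "a reference configuration." For general polygonal elements there is no single reference element, and the constant must be uniform over the whole family of shapes admitted by (\ref{bnm})--(\ref{bnm1}); you would need a compactness argument over that family, which you do not supply. The paper instead proves the explicit stability estimate $\|\mathbf q\|_T\le\gamma_0\|\mathbf q\cdot\vec n_0\|_{e_0}$ directly, via a three-step comparison of $\|Q\|$ on $e_0$, on the triangle $T_r$ spanned by $e_0,e_1$, and on $T$, using lower bounds $\ell_i\ge\varepsilon$ on a fixed subtriangle together with Corollary \ref{corol}; this is where the dependence of $C_1$ on $\alpha_0,\theta_0$ alone is actually secured. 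Everything else in your outline (the upper bound, the reduction to controlling $\|v_0-v_b\|_{e}$ edge by edge, the role of Lemma \ref{lemyo}) is consistent with the paper.
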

\begin{proof}
For simplicity, from now on, all constant are independent of $T$ and $h$, unless otherwise mentioned. They may depend on $k,\alpha_{0},\mbox{ and } \theta_{0}$. Suppose $\nabla_w v\in [P_{j}(T)]^{2}, v\in P_{k}(T)$. We know that
\begin{eqnarray}
(\nabla_w v,\vec{q})_{T}=(\nabla v,\vec{q})_{T}+\left\langle v_{b}-v_{0},\vec{q}\cdot \vec{n}\right\rangle_{\partial T} \qquad\forall\vec{q}\in[P_{j}(T)]^{2}.\label{00}
\end{eqnarray}
Suppose $\partial T=(\cup_{i=0}^{m-1}e_{i})$, and we want to construct a $\vec{q}\in [P_{j_{0}}(T)]^{2}\subseteq[P_{j}(T)]^{2}$, where $j_{0}\leq j$, so that
\begin{eqnarray}
(\vec{q},p)_{T}&=&0,\quad \forall p\in[P_{k-1}(T)]^{2}\label{ewq1},\\\vec{q}\cdot\vec{n}_{i}&=&0 \mbox{ on } e_{i},i=1,\dots,m-1,\label{ewq11}\\\left\langle v_{b}-v_{0}-\vec{q}\cdot n_{0},p\right\rangle _{e_{0}}&=&0,\quad p\in P_{k}(e_{0}),\label{ewq111}
\end{eqnarray}
where $\vec{n}_{i}$ is the unit outer normal to $e_{i}$. Without loss of generality, we may assume that $e_{0}=\{(x,0)|x\in (0,1)\},\forall (x,y)\in T,y>0$, and one of the vertices of $e_{1}$ is $(0,0)$. Let $\ell_{i}\in P_{1}(T),i=2,\dots,m-1$ be such that $\ell_{i}(e_{i})=0,\ell_{i}(p)\geq0, \forall p\in T, \mbox{ and } \max_{p\in T}\ell_{i}(p)=1$. Let
\begin{eqnarray}
\vec{q}=\ell_{2}\dots\ell_{m-1}Q_{1}\vec{t}=LQ_{1}\vec{t}=Q\vec{t},\label{sq1}
\end{eqnarray}
where $\vec{t}$ is a unit tangent vector to $e_{1}$ and $Q_{1}\in P_{j_{0}-m+2}(T)$. It is easy to see that $\vec{q}\cdot\vec{n}_{i}|_{e_{i}}=0, i=1,\dots,m-1$. We want to find $Q_{1}$ so that
\begin{eqnarray}
\left\langle v_{b}-v_{0}-\ell_{2}\dots\ell_{m}
Q_{1}(\vec{t}_{1}\cdot\vec{n}_{0}),p\right\rangle _{e_{0}}&=&0,\quad\forall p\in P_{j_{0}-m+2}(e_{0})\label{210},\\(\ell_{2}\dots\ell_{m}Q_{1},p)_{T}&=&0,\qquad \forall p\in P_{k-1}(T)\label{211}.
\end{eqnarray}
Let $j_{0}-m+2=k$ or  $j_{0}=k+m-2$. It is easy to see that if $Q_{1}$ satisfies (\ref{210})-(\ref{211}), then $\vec{q}$ satisfies (\ref{ewq1})-(\ref{ewq111}). In addition, (\ref{210})-(\ref{211}) has $(k+1)(k+2)/2$ equations and the same number of unknowns. To show (\ref{210})-(\ref{211}) has a unique solution $Q_{1}\in P_{k}(T)$ we set $v_{b}-v_{0}=0$. Setting $p=Q_{1}$ in (\ref{210}) yields
\begin{eqnarray*}
\left\langle \ell_{2}\dots\ell_{m-1}Q_{1},Q_{1}\right\rangle _{e_{0}}=0.
\end{eqnarray*}
Since $L=\ell_{2}\dots\ell_{m-1}>0$ inside $e_{0}$, $Q_{1}(x,0)\equiv0$. Thus $Q_{1}=yQ_{2}$, where $Q_{2}\in P_{k-1}(T)$. Similarly
\begin{eqnarray*}
(y\ell_{2}\dots\ell_{m-1}Q_{2},p)_{T}=0\quad\forall p\in P_{k-1}(T),
\end{eqnarray*}
implies $Q_{2}=0$ and thus $Q_{1}=0$. Note that for any $j\geq j_{0}$, since $\vec{q}\in[P_{j_{0}}(T)]^{2}\subset[P_{j}(T)]^{2}$, there exists at least one such $\vec{q}$ in $[P_{j}(T)]^{2}$. Note also that when every edge of $\partial T$ is parallel to another one, we can lower the number of $\ell_{i}$ in (\ref{sq1}) by one. Thus for such a $T$, we have $j_{0}=m+k-3$. Next, we want to show that the unique solution of (\ref{ewq1})-(\ref{ewq111}) satisfies
\begin{eqnarray}
\|\vec{q}\|_{T}\leq \gamma_{0}\|\vec{q}\cdot\vec{n_{0}}\|_{e_{0}},\label{qqqqqq}
\end{eqnarray}
for some $\gamma_{0}>0$. Note that
\begin{eqnarray*}
(\vec{q}\cdot\vec{n_{0}})^{2}=Q^{2}\sin^{2}\theta=\vec{q}\cdot\vec{q}\sin^{2}\theta\geq\vec{q}\cdot\vec{q}\sin^{2}\theta_{0},
\end{eqnarray*}
where $\theta$ is the angle between $e_{0}$ and $e_{1}$. Thus, (\ref{qqqqqq}) is equivalent to
\begin{eqnarray}
\|Q\|_{T}\leq\gamma_{0}\|Q\|_{e_{0}},\label{xzx}
\end{eqnarray}
for some $\gamma_{0}>0$ and we may assume
\begin{eqnarray*}
e_{1}=\{(0,y)|0\leq y\leq 1\},
\end{eqnarray*}
because of the assumption (\ref{bnm}). Now let $T_{r}$ be the triangle spanned by $e_{0}$ and $e_{1}$ and label the third edge of $T_{r}$ as $e_{t}$. Let $\hat{T}_{r}\subset T_{r}$ be the triangle with vertices $(0,0.25),(0,0.75)$ and $(0.5,0.25)$. Label the edge of $\hat{T}_{r}$ connecting $(0,0.75)$ and $(0.5,0.25)$ as $\hat{e}_{t}$. For $i=2,\dots,m-1$, let
\begin{eqnarray*}
d_{i}&=&dist(\hat{T}_{r},L_{i}),\\D_{i}&=&\max_{p\in T}dist(p,L_{i}),
\end{eqnarray*}
where $L_{i}$ is the line containing $e_{i}$. Then
\begin{eqnarray}
d_{i}&\geq& dist(\hat{e}_{t},e_{t})=\sqrt{2}/4,\nonumber\\D_{i}&\leq& (|e_{0}|+|e_{1}|+\dots+|e_{m-1}|)/2\nonumber\\&\leq& \frac{m\alpha_{0}}{2}\label{12qw}.
\end{eqnarray}
Thus $\forall p\in \hat{T}_{r}$,
\begin{eqnarray}
\ell_{i}(p)\geq\frac{\sqrt{2}}{2m\alpha_{0}}=\varepsilon,i=2,\dots,m-1.\label{yuh1}
\end{eqnarray}
 Now we will prove (\ref{xzx}) in $3$ steps:
\begin{eqnarray*}
I:\beta_{1}\|Q_{1}\|_{e_{0}}&\leq&\|Q\|_{e_{0}},\\II:\beta_{2}\|Q\|_{T_{r}}&\leq&\|Q_{1}\|_{e_{0}},\\III:\beta_{3}\|Q\|_{T}&\leq&\|Q\|_{T_{r}}.
\end{eqnarray*}
\begin{enumerate}[ I.]
	\item Let $\hat{e}=\{(x,0)|0\leq x\leq 0.75\}$. Then similar to (\ref{yuh1}), $\forall p\in \hat{e}, \ell_{i}(p)\geq \varepsilon,i=2.\dots,m-1$. Thus
\begin{eqnarray*}
\varepsilon^{m-2}\|Q_{1}\|_{\hat{e}}\leq\|Q\|_{\hat{e}}\leq \|Q\|_{e_{0}}.
\end{eqnarray*}
It follows from Corollary \ref{corol} that
\begin{eqnarray}
\|Q\|_{e_{0}}\geq\varepsilon^{m-2}\|Q_{1}\|_{\hat{e}}\geq C\varepsilon^{m-2}\|Q_{1}\|_{e_{0}}=\beta_{1}\|Q_{1}\|_{e_{0}}.
\end{eqnarray}
	\item It is easy to see that
\begin{eqnarray*}
\|Q\|^{2}_{T_{r}}&=&\int_{0}^{1}\int_{0}^{1-y}L^{2}Q^{2}_{1} dx dy=\int_{0}^{1}\int_{0}^{1-y}L^{2}(Q_{1}-Q_{1}(x,0)+Q_{1}(x,0))^{2}dx dy\\&\leq&2\int_{0}^{1}\int_{0}^{1-y}\left[  L^{2}(Q_{1}-Q_{1}(x,0))^{2}+L^{2}\left(Q_{1}(x,0)\right) ^{2}\right]  dx dy.
\end{eqnarray*}
It follows from (\ref{yuh1}) and Corollary \ref{corol} that
\begin{eqnarray}
\|LQ_{1}(x,0)\|^{2}_{T_{r}}&=&\int_{0}^{1}\int_{0}^{1-y} L^{2}\left(Q_{1}(x,0)\right)^{2}dx dy\nonumber\\&\leq& \int_{0}^{1}\int_{0}^{1}\left( Q_{1}(x,0)\right)^{2} dxdy\nonumber\\&=&\|Q_{1}\|^{2}_{e_{0}}\label{rtgf}
%\\\qquad&\leq& \delta_{1}\|Q_{1}(x,0)\|_{\hat{e}}^{2}\leq\beta_{2}\|Q\|^{2}_{\hat{e}}\leq\delta_{2}\|Q\|^{2}_{e_{0}}.\label{bnjh}
\end{eqnarray}
	Note that
	\begin{eqnarray*}
		Q_{1}-Q_{1}(x,0)=y\bar{Q},\quad \bar{Q}\in P_{k-1},
	\end{eqnarray*}
	and
	\begin{eqnarray}
		\int_{T}\left[LQ_{1}(x,0)+Ly\bar{Q}\right] P dA=0,\quad\forall p\in P_{k-1}.\label{23wdst}
	\end{eqnarray}
Since $Ly\geq\dfrac{1}{4}\varepsilon \mbox{ for }(x,y)\in\hat{T}_{r}\mbox{ and } Ly\geq 0, \forall (x,y)\in T$,
\begin{eqnarray}
\int_{T}Ly\bar{Q}^{2} dA\geq\frac{\varepsilon}{4}\int_{\hat{T}_{r}}\bar{Q}^{2} dA=\delta_{3}\|\bar{Q}\|^{2}_{\hat{T}_{r}}.\label{aaaa}
\end{eqnarray}
Since $\|\cdot\|_{\hat{T}_{r}} \mbox{ and } \|\cdot\|_{T_{r}}$ are equivalent norms on $P_{k-1}(T_{r})$,
\begin{eqnarray}
\delta_{3}\|\bar{Q}\|_{\hat{T}_{r}}\geq\beta_{3}\|\bar{Q}\|_{T_{r}}.\label{aaaa1}
\end{eqnarray}
Let $T^{c}_{e}$ be the circumscribed isosceles-right triangle obtained by scaling $T_{r}$. It is easy to see that
\begin{eqnarray}
diam({T^{c}_{e_{0}}})\leq 2m\alpha_{0}.\label{plmn}
\end{eqnarray}
Since $T_{r}\subseteq T\subseteq T^{c}_{e}$, it follows from (\ref{aaaa}), (\ref{aaaa1}), and Corollary \ref{corol} that
\begin{eqnarray*}
\int_{T}Ly\bar{Q}^{2} dA\geq\beta_{3}\|\bar{Q}-{1}\|_{T_{r}}^{2}\geq\delta_{4}\|\bar{Q}\|^{2}_{T},
\end{eqnarray*}
where $\delta_{4}$ depend on (\ref{plmn}) and $k$. Letting $p=\bar{Q}$ in (\ref{23wdst}) yields
\begin{equation*}
\int_{T}Ly\bar{Q}^{2} dA=-\int_{T}LQ_{1}(x,0)\bar{Q} dA\leq \|Q_{1}(x,0)\|_{T}\|\bar{Q}\|_{T},
\end{equation*}
and thus
\begin{eqnarray*}
\|\bar{Q}\|_{T}\leq \frac{1}{\delta_{4}}\|Q_{1}(x,0)\|_{T}.
\end{eqnarray*}
Then
\begin{eqnarray*}
	\|\bar{Q}\|^{2}_{T}\leq \delta_{5}\int_{0}^{N}\int_{0}^{M}Q_{1}^{2}(x,0)dxdy=CN\int_{0}^{M}Q_{1}^{2}(x,0)dx=C\|Q_{1}\|^{2}_{[0,M]},
\end{eqnarray*}
where $N=\max_{(x,y)\in T}y\leq m\alpha_{0} \mbox{ and }M=\max_{(x,y)\in T}x\varepsilon\leq m\alpha_{0}$. It follows from Corollary \ref{corol} and (\ref{aaaa})
\begin{eqnarray}
\|Ly\bar{Q}\|_{\hat{T_{r}}}\leq\|\bar{Q}\|_{T}\leq C\|Q_{1}\|_{e_{0}}.\label{qwer}
\end{eqnarray}
Combining (\ref{qwer}) and (\ref{rtgf}) yields
\begin{eqnarray}
\|Q\|_{T_{r}}\leq C\|Q_{1}\|_{e_{0}}.
\end{eqnarray}
	\item Applying Corollary \ref{corol} again yields
	\begin{eqnarray}
	C\|Q\|_{T}\leq \|Q\|_{T_{r}}.
	\end{eqnarray}
	Now we have completed the $3$-step argument.\newline
	 By a scaling argument, we have
	\begin{eqnarray}
	\|Q\|_{T}\leq \gamma_{0}h^{1/2}\|Q\|{e_{0}}.\label{27uij}
	\end{eqnarray}
	Plugging $\vec{q}$ into (\ref{00}) yields
	\begin{eqnarray}
	\|Q\|_{T}\|\nabla_wv\|_{T}&\geq&(\nabla_wv,\vec{q}\vec{n})_{\hat{T}}=\left\langle v_{b}-v_{0},\vec{q}\vec{n}\cdot\vec{n}\right\rangle _{e_{0}}\nonumber\\&=&\|v_{b}-v_{0}\|_{e_{0}}^{2}\nonumber\\&=&\|v_{b}-v_{0}\|_{e_{0}}\|Q\|_{e_{0}}.
	\end{eqnarray}
	It follows from (\ref{27uij}) that
	\begin{eqnarray*}
	\|\nabla_wv\|_{T}\geq Ch^{1/2}\|v_{b}-v_{0}\|_{e_{0}}.
	\end{eqnarray*}
\end{enumerate}
Similarly,
\begin{eqnarray*}
	\|v_{b}-v_{0}\|_{e_{i}}\leq Ch^{1/2}_{T}\|\nabla_wv\|_{T},i=1,\ldots,m-1.
\end{eqnarray*}
Thus
\begin{eqnarray*}
 \|v_{b}-v_{0}\|_{\partial T}\leq Ch^{1/2}_{T}\|\nabla_wv\|_{T}.
\end{eqnarray*}
By letting $\mathbf{q}=\nabla_wv$ in (\ref{00}), we arrive at
\begin{eqnarray*}
(\nabla_w v,\nabla_wv)_{T}=(\nabla v_{0},\nabla_wv)_{T}+\left\langle v_{b}-v_{0},\nabla_wv\cdot \mathbf{n}\right\rangle_{\partial T}.
\end{eqnarray*}
It follows from the trace inequality and the inverse inequality that
\begin{eqnarray*}
\|\nabla_wv\|^{2}_{T}&\leq& \|\nabla v_{0}\|_{T}\|\nabla_wv\|_{T}+\|v_{0}-v_{b}\|_{\partial T}\|\nabla_wv\|_{\partial T}\\&\leq&\|\nabla v_{0}\|_{T}\|\nabla_wv\|_{T}+Ch_{T}^{1/2}\|v_{0}-v_{b}\|_{\partial T}\|\nabla_wv\|_{T}.
\end{eqnarray*}
Thus
\begin{eqnarray*}
\|\nabla_wv\|_{T}&\leq&C\left( \|\nabla v_{0}\|_{T}+h_{T}^{1/2}\|v_{0}-v_{b}\|_{\partial T}\right).
\end{eqnarray*}
\end{proof}

To show that $j_{0}=k+m-2 \mbox{ or }j_{0}=k+m-3$ is the lowest possible degree for the weak gradient, we give the following examples.
\begin{example}\label{exas1}
	Suppose $\Omega=(0,1)\times(0,1)$ and is partitioned  as shown in Figure \ref{fi11}.
	\begin{figure}[h!]
		\centering
		\includegraphics[width=0.7\textwidth]{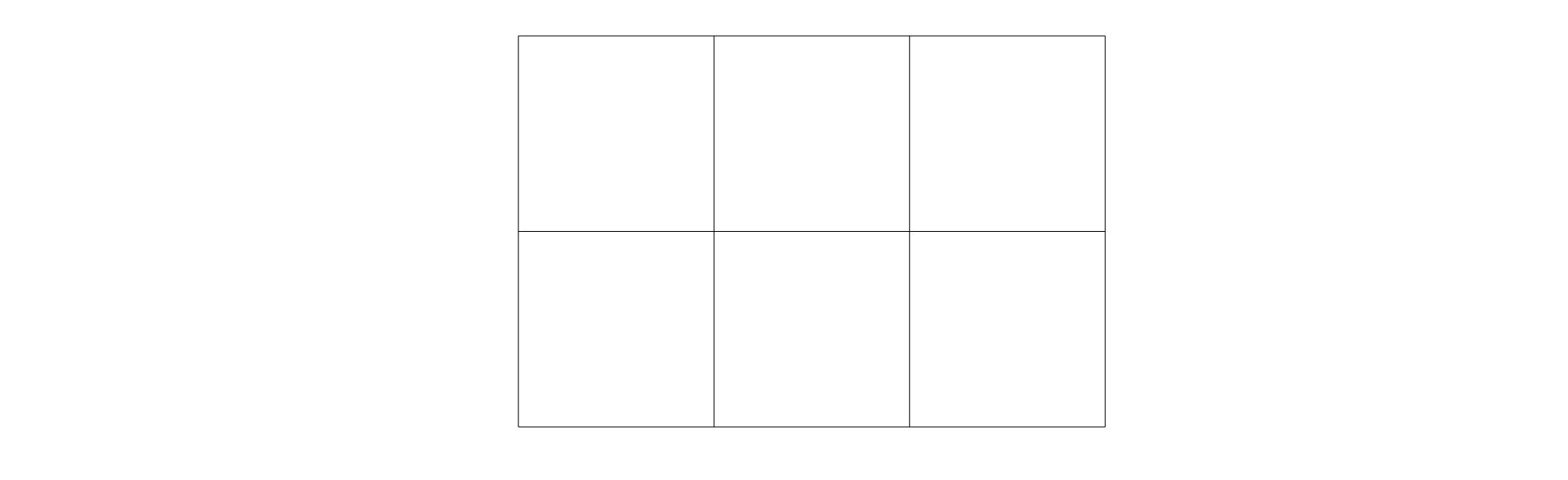}
		\caption{}\label{fi11}
	\end{figure}\\
	Let $j=k=1=j_{0}-1$. To see if (\ref{WG form}) has a unique solution in $V_{h}^{0}$, we set $f=0$ and $v=u_{h}$. Then $\nabla_w u_{h}=0$. By definition,
	\begin{eqnarray*}
	0=(\nabla_w u_{h},\vec{q})_{{\cal T}_{h}}=-(u_{h},\nabla \vec{q})_{{\cal T}_{h}}+\left\langle u_{b},\vec{q}\cdot\vec{n} \right\rangle_{\partial {{\cal T}_{h}}},\quad \forall\vec{q}\in[P_{1}({\cal T}_{h})]^{2}.
	\end{eqnarray*}
Note that the degree of freedom of $[P_{1}({\cal T}_{h})]^{2}$ is $36$ while the degree of freedom of $V_{h}^{0}$ is $39$. Thus, there exist $u_{h}\neq0$ so that $\nabla_w u_{h}=0$. Thus Algorithm $1$ will not work.
\end{example}
\begin{example}
	Let $\Omega=(0,1)\times (0,1)$ and is partitioned as shown in Figure \ref{yhb}.
	\begin{figure}[h!]
		\centering
		\includegraphics[width=0.7\textwidth]{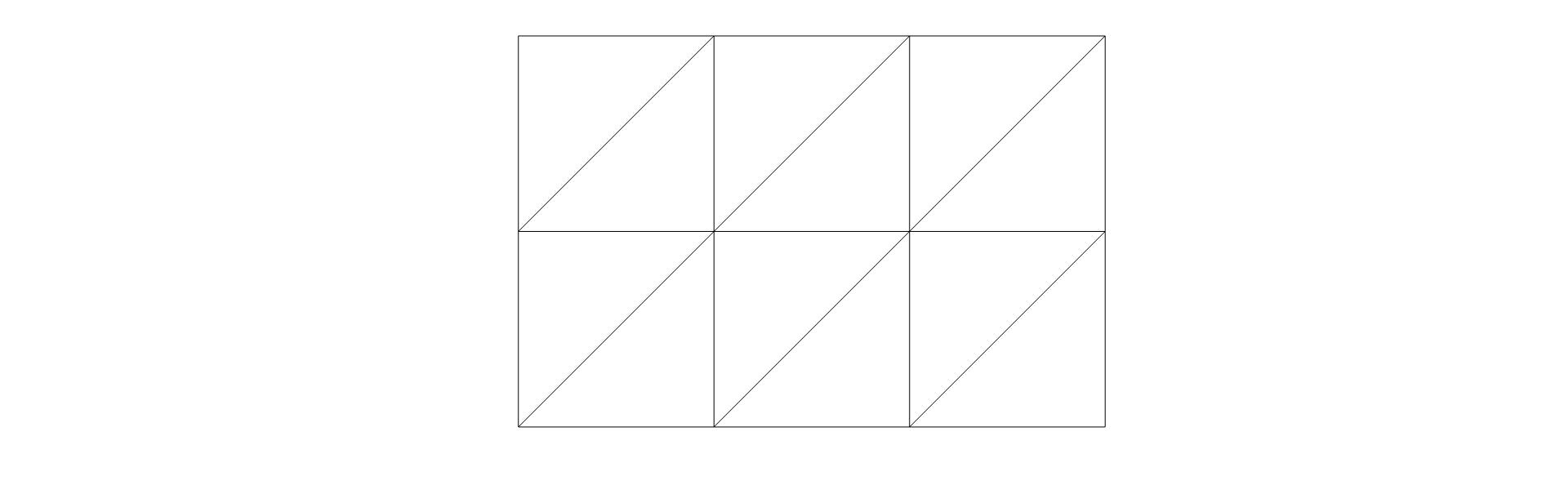}
		\caption{}\label{yhb}
	\end{figure}
	Let $j=k=1=j_{0}-1$.
	Note that the degree of freedom of $[P_{1}({\cal T}_{h})]^{2}$ is $72$ while the degree of freedom of $V_{h}^{0}$ is $75$. A similar argument as in example \ref{exas1} shows that Algorithm $1$ will not work.
\end{example}\\

For the completeness, we list the following from \cite{b2}.
\begin{theorem}
	Let $u_{h}\in V_{h}$ be the weak Galerkin finite element solution of (\ref{WG form}). Assume that the exact solution $u\in H^{k+1}(\Omega)$ and $H^{2}$-regularity holds true. Then, there exists a constant $C$ such that
	\begin{eqnarray}
		\3baru-u_{h}\3bar&\leq& Ch^{k}|u|_{k+1},\\\|u-u_{0}\|&\leq& Ch^{k+1}|u|_{k+1}.
	\end{eqnarray}
\end{theorem}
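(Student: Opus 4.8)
The plan is to run the classical weak Galerkin error analysis while routing every nonconforming residual through the norm‑equivalence Theorem above, since the present scheme has no stabilizer. First I would introduce the local $L^{2}$‑projections $Q_{0}$ onto $P_{k}(T)$, $Q_{b}$ onto $P_{k}(e)$ for each edge $e\subset\partial T$, set $Q_{h}u=\{Q_{0}u,Q_{b}u\}$, and let $\mathbf{Q}_{h}$ be the $L^{2}$‑projection onto $[P_{j}(T)]^{2}$. The key preliminary is the (only approximate, since $j>k$ is permitted) commutativity estimate
\[
\Big(\sum_{T\in{\cal T}_{h}}\|\nabla_{w}(Q_{h}u)-\mathbf{Q}_{h}(\nabla u)\|_{T}^{2}\Big)^{1/2}\le Ch^{k}|u|_{k+1},
\]
which follows from the definition (\ref{EQ:WeakGradient}), exact integration by parts, the approximation properties of $Q_{0}$ and $Q_{b}$, and the trace and inverse inequalities. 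A companion fact, proved the same way, is that $\nabla_{w}(Q_{h}u)-\mathbf{Q}_{h}(\nabla u)$ is $L^{2}(T)$‑orthogonal to $[P_{k-1}(T)]^{2}$, because for $\mathbf q\in[P_{k-1}(T)]^{2}$ both $\nabla\!\cdot\!\mathbf q$ on $T$ and $\mathbf q\cdot\mathbf n$ on each edge have degree $<k$ and hence are annihilated by $u-Q_{0}u\perp P_{k}(T)$ and $u-Q_{b}u\perp P_{k}(e)$; this orthogonality is what buys the extra power of $h$ in the $L^{2}$ estimate later. Finally, for $w\in H^{1}(\Omega)$ (identified with $\{w,w|_{\partial T}\}$) one has $\nabla_{w}w=\mathbf{Q}_{h}(\nabla w)$ exactly.

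Second, I would derive the error equation. Starting from $-\Delta u=f$, testing against $v_{0}$ for $v=\{v_{0},v_{b}\}\in V_{h}^{0}$, integrating by parts element by element, substituting $\mathbf q=\mathbf{Q}_{h}\nabla u$ into the identity (\ref{00}), and using $\sum_{T}\langle\nabla u\cdot\mathbf n,v_{b}\rangle_{\partial T}=0$, one obtains
\[
A(Q_{h}u-u_{h},v)=\ell_{1}(v)+\ell_{2}(v),\qquad \ell_{1}(v)=\sum_{T}(\nabla_{w}(Q_{h}u)-\mathbf{Q}_{h}\nabla u,\nabla_{w}v)_{T},
\]
with $\ell_{2}(v)=-\sum_{T}\langle v_{0}-v_{b},(\mathbf{Q}_{h}\nabla u-\nabla u)\cdot\mathbf n\rangle_{\partial T}$. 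Setting $e_{h}=Q_{h}u-u_{h}$ and $v=e_{h}$, the commutativity estimate gives $|\ell_{1}(e_{h})|\le Ch^{k}|u|_{k+1}\,\triplenorm{e_{h}}$, while Cauchy--Schwarz, the trace estimate and the approximation of $\mathbf{Q}_{h}\nabla u$ give $|\ell_{2}(e_{h})|\le Ch^{k}|u|_{k+1}\,\|e_{h}\|_{1,h}$; the norm‑equivalence Theorem above (applicable as $j\ge j_{0}$) turns $\|e_{h}\|_{1,h}$ into $\triplenorm{e_{h}}$. Hence $\triplenorm{e_{h}}^{2}=A(e_{h},e_{h})=\ell_{1}(e_{h})+\ell_{2}(e_{h})\le Ch^{k}|u|_{k+1}\,\triplenorm{e_{h}}$, so $\triplenorm{e_{h}}\le Ch^{k}|u|_{k+1}$, and $\triplenorm{u-u_{h}}\le\triplenorm{u-Q_{h}u}+\triplenorm{e_{h}}\le Ch^{k}|u|_{k+1}$ because $\triplenorm{u-Q_{h}u}=(\sum_{T}\|\mathbf{Q}_{h}\nabla u-\nabla_{w}(Q_{h}u)\|_{T}^{2})^{1/2}$ is again bounded by the commutativity estimate.

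Third, for the $L^{2}$ bound I would use an Aubin--Nitsche duality. Let $\Phi\in H_{0}^{1}(\Omega)$ solve $-\Delta\Phi=e_{0}$ with $e_{0}=Q_{0}u-u_{0}$, so that $H^{2}$‑regularity gives $\|\Phi\|_{2}\le C\|e_{0}\|$. Testing this problem against $e_{0}$ and repeating the manipulation above with $\Phi$ in place of $u$ and $e_{h}$ in place of $v$ (note $Q_{h}\Phi\in V_{h}^{0}$), one reaches
\[
\|e_{0}\|^{2}=A(e_{h},Q_{h}\Phi)+\sum_{T}(\nabla_{w}e_{h},\mathbf{Q}_{h}\nabla\Phi-\nabla_{w}(Q_{h}\Phi))_{T}+\sum_{T}\langle e_{0}-e_{b},(\mathbf{Q}_{h}\nabla\Phi-\nabla\Phi)\cdot\mathbf n\rangle_{\partial T},
\]
and $A(e_{h},Q_{h}\Phi)=\ell_{1}(Q_{h}\Phi)+\ell_{2}(Q_{h}\Phi)$ by the error equation. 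Every term on the right is $O(h^{k+1}|u|_{k+1}\|\Phi\|_{2})$: the energy bound $\triplenorm{e_{h}}\le Ch^{k}|u|_{k+1}$ supplies $h^{k}$, and the missing factor $h$ comes from an approximation estimate --- for $\ell_{1}(Q_{h}\Phi)$ from the $[P_{k-1}]^{2}$‑orthogonality of $\nabla_{w}(Q_{h}u)-\mathbf{Q}_{h}\nabla u$ together with $\|\nabla_{w}(Q_{h}\Phi)-\Pi_{k-1}\nabla_{w}(Q_{h}\Phi)\|_{T}\le Ch_{T}|\Phi|_{2,T}$ ($\Pi_{k-1}$ the $L^{2}$‑projection onto $[P_{k-1}(T)]^{2}$); for $\ell_{2}(Q_{h}\Phi)$ from $\|Q_{0}\Phi-Q_{b}\Phi\|_{\partial T}\le Ch_{T}^{3/2}|\Phi|_{2,T}$; and for the remaining two sums from the commutativity estimate applied to $\Phi$ (using $\Phi\in H^{2}$) and a trace estimate. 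Dividing by $\|e_{0}\|\le C\|\Phi\|_{2}$ gives $\|e_{0}\|\le Ch^{k+1}|u|_{k+1}$, hence $\|u-u_{0}\|\le\|u-Q_{0}u\|+\|e_{0}\|\le Ch^{k+1}|u|_{k+1}$.

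The step I expect to be the main obstacle is the commutativity package in the first paragraph --- proving both $\|\nabla_{w}(Q_{h}u)-\mathbf{Q}_{h}\nabla u\|_{T}\le Ch_{T}^{k}|u|_{k+1,T}$ and the $[P_{k-1}(T)]^{2}$‑orthogonality --- together with the bookkeeping that guarantees every residual in the error and duality equations really appears in the form $h_{T}^{-1/2}\|v_{0}-v_{b}\|_{\partial T}$, so that it can be absorbed by $\|\cdot\|_{1,h}$ and therefore, through the norm‑equivalence Theorem above, by $\triplenorm{\cdot}$. Once that accounting is settled, the rest reduces to routine Cauchy--Schwarz, trace and inverse estimates.
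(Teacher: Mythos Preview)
The paper does not give its own proof of this theorem; it merely quotes the result from \cite{b2} (``For the completeness, we list the following from \cite{b2}''), so there is nothing in the present paper to compare your argument against. Your outline is precisely the standard stabilizer--free WG error analysis of \cite{b2}: the approximate commutativity $\|\nabla_{w}(Q_{h}u)-\mathbf{Q}_{h}\nabla u\|_{T}\le Ch_{T}^{k}|u|_{k+1,T}$ together with its $[P_{k-1}(T)]^{2}$--orthogonality, the error equation with the two residuals $\ell_{1},\ell_{2}$, the use of the norm--equivalence theorem of this note to convert $\|e_{h}\|_{1,h}$ into $\triplenorm{e_{h}}$ in the bound for $\ell_{2}$, and the Aubin--Nitsche duality for the $L^{2}$ estimate. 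All of the steps you list go through as written; in particular the extra factor $h$ in $\ell_{1}(Q_{h}\Phi)$ via the $[P_{k-1}]^{2}$--orthogonality and in $\ell_{2}(Q_{h}\Phi)$ via $\|Q_{0}\Phi-Q_{b}\Phi\|_{\partial T}\le Ch_{T}^{3/2}|\Phi|_{2,T}$ are exactly the right devices.
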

%Numerical Experiments
\section{Numerical Experiments}
We are devoting this section to verify our theoretical results in previous sections by two numerical examples. The domain in all examples is $\Omega=(0,1)\times (0,1)$. We will implement the SFWG finite element method (\ref{WG form}) on triangular meshes.
A uniform triangulation of the domain $\Omega$ is used. By refining each triangle for $N=2, 4, 8, 16, 32,64,128$, we obtain a sequence of partitions. The first two levels of grids are shown in Figure \ref{fi3}. We apply the SF-WG finite element method with $(P_{k}(T),P_{k}(e),[P_{k+1}(T)]^{2}),k=1,2$ finite element space to find SFWG solution $u_h=\{u_0,u_b\}$ in the computation.
\begin{figure}[h!]
	\centering
	\includegraphics[width=0.6\textwidth]{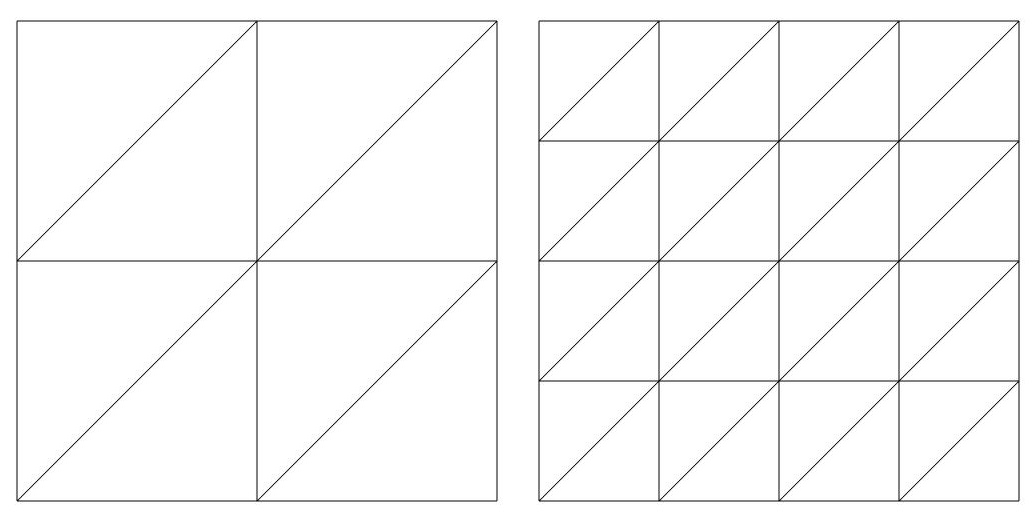}
	\caption{The first two triangle grids in computation}\label{fi3}
\end{figure}
\begin{example}
In this example, we solve the Poisson problem (\ref{bvp})-(\ref{dbc}) posed on the unit square $\Omega$, and  the analytic solution is
\begin{eqnarray}
u(x,y)=\sin(\pi x)\sin(\pi y).
\end{eqnarray}
The boundary conditions and the source term $f(x,y)$ are computed accordingly. Table \ref{tab:example1} lists errors and convergence rates in $H^{1}$-norm and $L^{2}$-norm.
\begin{table}[H]
	\caption{Error profiles and convergence rates for $P_{k}$ elements with $P_{k+1}^{2}$ weak gradient, $(k=1,2)$.}
	\label{tab:example1}
	\small % text size of table content
	\centering % center the table
	\begin{tabular}{lccccr} % alignment of each column data
		\toprule[\heavyrulewidth]\toprule[\heavyrulewidth]
	\textbf{$k$} &	\textbf{$h$} & \textbf{$\3baru_{h}-Q_{h}u\3bar$} & \textbf{$\mbox{Rate}$} & \textbf{$\|u_{0}-Q_{0}u\|$} & \textbf{$\mbox{Rate}$} \\
		\midrule
		&1/2   &8.8205E-01  & -     &8.0081E-02  & - \\
		&1/4   &4.3566E-01  & 1.02  &2.5055E-02  & 1.68\\
	  	&1/8   &2.1565E-01  & 1.01  &6.7091E-03  & 1.90\\
	  1	&1/16  &1.0747E-01  & 1.00  &1.7092E-03  & 1.97\\
		&1/32  &5.3684E-02  & 1.00  &4.2940E-04  & 1.99\\
		&1/64  &2.6836E-02  & 1.00  &1.0749E-04  & 2.00\\
		&1/128 &1.3417E-02  & 1.00  &2.6871E-05    & 2.00\\
	    \bottomrule[\heavyrulewidth]\\
		&1/2  &2.5893E-01     &- &1.0511E-02 &- \\
		&1/4  &6.5338E-02 & 1.99 &1.2889E-03 &3.03\\
	  	&1/8  &1.6252E-02 & 2.00 &1.5596E-04 &3.05\\
	  2	&1/16 &4.0538E-03 & 2.09 &1.9193E-05 &3.02 \\
		&1/32 &1.0129E-03 & 2.00 &2.3837E-06 &3.00 \\
		&1/64 &2.5321E-04 & 2.00 &2.9713E-07 &3.00\\
        &1/128 &6.3304E-05  & 2.00 &3.7094E-08 &3.00\\
      	\bottomrule[\heavyrulewidth]
	\end{tabular}
\end{table}
\end{example}
\begin{example}
In this example, we consider the problem $-\Delta u=f$ with boundary condition and the exact solution is
\begin{eqnarray}
u(x,y)=x(1-x)y(1-y).
\end{eqnarray}
The results obtained in Table \ref{tab:example2} show  the errors of SFWG scheme and the convergence rates in the $\3bar\cdot\3bar$ norm  and $\|\cdot\|$ norm. The simulations
are conducted on triangular meshes and polynomials of order $k = 1, 2$. The SFWG scheme with $P_{k}$ element has optimal convergence rate of $O(h^{k})$ in $H^{1}$-norm and $O(h^{k+1})$ in $L^{2}$-norm.
\begin{table}[H]
	\caption{Error profiles and convergence rates for $P_{k}$ elements with $P_{k+1}^{2}$ weak gradient, $(k=1,2)$.}
	\label{tab:example2}
	\small % text size of table content
	\centering % center the table
	\begin{tabular}{lccccr} % alignment of each column data
		\toprule[\heavyrulewidth]\toprule[\heavyrulewidth]
		\textbf{$k$}&\textbf{$N$} & \textbf{$\3bar e_{h}\3bar$} & \textbf{$\mbox{Rate}$} & \textbf{$\|e_{0}\|$} & \textbf{$\mbox{Rate}$} \\
    	\midrule
	   &1/2   &5.6636E-02   & -     &5.3064E-03  & - \\
	    &1/4   &2.9670E-02  & 0.93  &1.6876E-03   & 1.65\\
	    &1/8   &1.4948E-02  & 0.99  &4.5066E-04    & 1.90\\
	1	&1/16  &7.4848E-03  & 1.00  &1.1461E-04    & 1.98\\
	    &1/32  &3.7435E-03  & 1.00  &2.8778E-05      & 1.99\\
	    &1/64  &1.8719E-03  & 1.00  &7.2024E-06      & 2.00\\
	    &1/128 &9.3596E-04  & 1.00  &1.8011E-06      & 2.00\\
	    \bottomrule[\heavyrulewidth]\\
	    &1/2  &1.3353E-02 &-     &5.6066E-04  & - \\
	    &1/4  &3.4831E-03& 1.94 &6.8552E-05  & 3.03\\
	    &1/8  &8.7712E-04 & 1.99 &8.2466E-06  & 3.06\\
	2	&1/16 &2.1961E-04 & 2.00 &1.0094E-06  & 3.03\\
	    &1/32 &5.4967E-05 & 2.00 &1.2482E-07  & 3.02 \\
	    &1/64 &1.3747E-05 & 2.00 &1.5518E-08  & 3.00\\
	    &1/128 &3.4375E-06& 2.00 &1.9344E-09  & 3.00\\

	\bottomrule[\heavyrulewidth]
	\end{tabular}
\end{table}
\end{example}
Overall, the numerical experiment's results shown in the formentioned examples match the theoretical study part in the previous sections.
%Bibliography


\begin{thebibliography}{99}
\bibitem{b2} {\sc X. Ye, S. Zhang}, {\em On stabilizer-free weak Galerkin finite element method on polytopal meshes},  \href{URL}{	 arXiv:1906.06634v2}, 2019.
%___________________________________________________________
\end{thebibliography}
\end{document}